\documentclass{elsarticle}
\usepackage[all]{xy}
\usepackage{amscd}
\newcommand{\ds}{\displaystyle}
\newcommand{\lra}{\longrightarrow}
\renewcommand{\P}{\frak{P}}

%input tex.sty
%
%
%
%
\usepackage{color}
\usepackage{amssymb}
\usepackage{amsthm}
\usepackage{amsmath}
\usepackage{latexsym}

\newtheorem{Thm}{Theorem}[section]

\newtheorem{Cor}[Thm]{Corollary}

\newtheorem{Lem}[Thm]{Lemma}

\newtheorem{Exa}[Thm]{Example}

\newcommand{\bQ}{\overline{\mathbb{Q}}}
\newcommand{\Q}{\mathbb{Q}}
\newcommand{\Z}{\mathbb{Z}}

\renewcommand{\O}{{\cal O}}

\renewcommand{\ker}[1]{{\rm ker}\hspace{0.1cm}{#1}}

\renewcommand{\P}{{\frak P}}
\newcommand{\p}{{\frak p}}

\newcommand{\mapright}[1]{%
  \smash{\mathop{%
  \hbox to 1cm{\rightarrowfill}}\limits^{#1}}}
\newcommand{\mapleft}[1]{%
  \smash{\mathop{%
  \hbox to 1cm{\leftarrowfill}}\limits^{#1}}}

\pagestyle{plain}

\newcounter{CounterEQUlabel}
\newcommand{\EQUlabel}[1]{\label{#1}
\ifcase \theCounterEQUlabel
   \relax
  \or
   \hspace{1em}
   \mbox{{\color{blue} \tiny$\langle$\rmfamily#1$\rangle$}}\index{zzz#1@#1}
  \fi
}

\title{On the class numbers of the fields of the 
$p^n$-torsion points of certain elliptic curves over $\Q$}

\author[rvt]{Fumio SAIRAIJI%\fnref{fn1}
}
\ead{sairaiji@it.hirokoku-u.ac.jp}

\author[focal]{Takuya YAMAUCHI\fnref{fn2}
}
\ead{yamauchi@edu.kagoshima-u.ac.jp}

\fntext[fn2]{The second author is partially supported by JSPS Grant-in-Aid for Scientific
Research No.23740027.}
\address[rvt]{
Faculty of Engineering,\\
Hiroshima International University,\\
Hiro, Hiroshima\\
737-0112, 
JAPAN\\
e-mail:~sairaiji@it.hirokoku-u.ac.jp}
\address[focal]{
Department of mathematics, Faculty of Education\\
Kagoshima University\\
Korimoto 1-20-6 Kagoshima 890-0065, 
JAPAN \\
and 
Department of mathematics \\
 University of Toronto \\
Toronto, Ontario M5S 2E4, CANADA
\\
e-mail:~yamauchi@edu.kagoshima-u.ac.jp \\
or tyama@math.toronto.edu}

%\date{}
%\pagestyle{plain}

%%%%%%    TEXT START    %%%%%%
\begin{document}

\begin{abstract}
Let $E$ be an elliptic curve over $\Q$ with prime conductor $p$. 
For each non-negative integer $n$ we put $K_n:=\Q(E[p^n])$. 
The aim of this paper is to estimate the order of the $p$-Sylow group of the ideal class group of $K_n$. 
We give a lower bound in terms of the Mordell-Weil rank of $E(\Q)$. 
As an application of our result, 
we give an example such that 
$p^{2n}$ divides the class number of the field $K_n$ in the case of $p=5077$ for each positive integer $n$. 
\end{abstract}
\begin{keyword}
elliptic curve, Mordell-Weil rank, class number.
\end{keyword}

\maketitle
\section{Introduction}
Let $E$ be an elliptic curve over $\Q$ with complex multiplication by the ring of integers $\O_K$ of an imaginary quadratic field $K$. 
Let $p$ be an odd prime number such that $E$ has good ordinary reduction 
at $p$. 
Then $p$ splits completely in $K$ with $p\O_K=\pi\overline{\pi}$, where $\pi$ is a prime element of $K$. 
Let $K_1$ be the Galois extension generated by the $\pi$-torsion points on $E$ and $K_\infty$ the $\Z_p$-extension of $K_1$ generated by the $\pi$-power torsion points on $E$. 
Then Greenberg (\cite{G}, p.\ 365) shows that 
$K_\infty$ has $\lambda$-invariant greater than or equal to $r-1$, where $r$ is the $\O_K$-rank of $\Q$-rational points $E(\Q)$. 

There are at least two possible natural directions in the generalizations of the results of Greenberg. 
The first generalization is on the higher dimensional CM abelian varieties, 
which is obtained by Fukuda-Komatsu-Yamagata \cite{FKY}. 
The second generalization is on non-CM elliptic curves, 
which is the aim of this paper. 
\par
\bigskip
Let $p$ be an odd prime number. 
Let $E$ be an elliptic curve over $\Q$ with conductor $p$. 
We denote the minimal discriminant of $E$ by $\Delta$. 
This condition forces us that $p\ge 11$, $p\neq 13$ (cf.\ \cite{cre}), 
and that $\Delta$ divides $p^5$ (cf.\ \cite{MO}). 
For each non-negative integer $n$, 
we denote the $p^n$-torsion subgroup of $E$ by $E[p^n]$. 
We also denote the field generated by the points on $E[p^n]$ over $\Q$ by $K_n$. 
The Mordell-Weil Theorem asserts that $E(\Q)$ is a finitely generated 
abelian group. 
Thus there exists a free abelian subgroup $A$ of $E(\Q)$ of finite rank such that $A+E(\Q)_{{\rm tors}}=E(\Q)$. We denote the rank of $A$ by $r$. 
We put $L_n:=K_n([p^n]_E^{-1}A)$, 
where $[p^n]_E$ is the multiplication-by-$p^n$ map on $E$. 

We denote the generators of $A$ by $P_1,\ldots,P_{r}$. 
For each $j\in \{1,\ldots,r\}$ we take a point $T_j$ of $E(L_n)$ satisfying 
$$
[p^n]_E(T_j)=P_j.
$$
Then we have $L_n=K_n(T_1,\ldots,T_{r})$. 
We have an injective homomorphism 
$$
\Phi_n:~\mbox{Gal}(L_n/K_n) \rightarrow E[p^n]^{r}~:~
\sigma \mapsto ({}^\sigma T_j- T_j)_j.
$$
In particular, the extension degree $[L_n:K_n]$ is equal to a power of $p$. 

We denote the maximal unramified abelian extension of $K_n$ 
by $K_n^{{\rm ur}}$. 
We define the exponent $\kappa_n$ by 
$$
[L_n \cap K_n^{{\rm ur}}:K_n]=p^{\kappa_n}.
$$
Then we have the following theorem. 
%
%
%
%Theorem 1.1
%
\begin{Thm}
Let $E$ be an elliptic curve with prime conductor $p$. 
Then the inequality 
\begin{equation}
\kappa_n \geq (2r-4)n
\label{120307b}
\end{equation}
holds for each $n \geq 1$. 
\label{1002a}
\end{Thm}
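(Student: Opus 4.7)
The plan is to bound $\kappa_n$ via the identity $\kappa_n = v_p(|G|) - v_p(|I|)$, where $G := {\rm Gal}(L_n/K_n)$ and $I \subset G$ is the subgroup generated by the inertia groups $I_\mathfrak{p}$ at the primes $\mathfrak{p} \mid p$ of $K_n$. Since $L_n \cap K_n^{{\rm ur}} = L_n^I$, it suffices to prove $|G| = p^{2rn}$ and $|I| \leq p^{4n}$.

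For the first bound I would show that $\Phi_n$ is surjective. Since $K_n$ already contains $E[p^n]$ and $P_1, \ldots, P_r$ are linearly independent modulo torsion in $E(\Q)$, the Kummer classes $\delta(P_j) \in H^1(K_n, E[p^n])$ should be independent enough to force the injection $\Phi_n : G \hookrightarrow E[p^n]^r$ to be onto; a Bashmakov-type argument together with the constraints from Mazur's classification of mod-$p$ Galois representations of elliptic curves of prime conductor is expected to yield this.

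The heart of the proof is a local-at-$p$ computation bounding $|I|$. Away from $p$, $E$ has good reduction, so multiplication-by-$p^n$ is \'etale and the $T_j$ generate unramified extensions of $K_n$; thus $L_n/K_n$ is unramified outside primes above $p$ and $I = \sum_{\mathfrak{p} \mid p} I_\mathfrak{p}$. At each $\mathfrak{p} \mid p$, $E$ has multiplicative reduction (the conductor is exactly $p$), and Tate's uniformization gives $E(\overline{\Q}_p) \cong \overline{\Q}_p^*/q^\Z$ with $P_j \mapsto u_j \in \Q_p^*/q^\Z$. Locally, $L_n = K_n(u_1^{1/p^n}, \ldots, u_r^{1/p^n})$, and Kummer theory embeds $I_\mathfrak{p}$ into $\mu_{p^n}^r \cong \hat{E}[p^n]_\mathfrak{p}^r$. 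The key refinement is that since $u_j \in \Q_p^*$, its Kummer class factors through $\Q_p^*/(\Q_p^*)^{p^n} \cong (\Z/p^n)^2$ (one factor from $v_p$, one from a topological generator of $1+p\Z_p$). Writing $(a_j, b_j) \in (\Z/p^n)^2$ for the image of $u_j$, we obtain $I_\mathfrak{p} = \hat{E}[p^n]_\mathfrak{p} \otimes B_\mathfrak{p}$ inside $E[p^n]^r$, where $B_\mathfrak{p} := \langle (a_j)_j, (b_j)_j \rangle \subset (\Z/p^n)^r$ has rank at most $2$.

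Crucially, since $u_j$ depends only on $P_j \in E(\Q)$ (not on the choice of embedding $K_n \hookrightarrow \overline{\Q}_p$), the subspace $B_\mathfrak{p} = B$ is independent of $\mathfrak{p}$. Summing yields $I = W \otimes B$ with $W := \sum_{\mathfrak{p} \mid p} \hat{E}[p^n]_\mathfrak{p} \subset E[p^n]$ Galois-stable. Since both $W \subset E[p^n] \cong (\Z/p^n)^2$ and $B$ have $\Z/p^n$-rank at most $2$, we conclude $|I| \leq p^{4n}$, completing the proof of $\kappa_n \geq (2r-4)n$. The main obstacles are the surjectivity of $\Phi_n$ (which may require case analysis according to the mod-$p$ Galois image) and the local Kummer computation isolating the rank-$2$ contribution of $\Q_p^*/(\Q_p^*)^{p^n}$ that controls the ramification.
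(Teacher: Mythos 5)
Your proposal is correct in substance and follows the same skeleton as the paper: compute $[L_n:K_n]=p^{2rn}$ by a Bashmakov-type argument, bound the subgroup $I$ generated by all inertia groups above $p$ by $p^{4n}$ via Tate's uniformization, and divide. The genuinely different ingredient is your count of $|I|$. The paper first proves (its Lemmas 3.2 and 3.3) that each local inertia group $I_\P$ is \emph{cyclic} --- this uses $p\nmid \mathrm{ord}_p(q)$, which holds because $\Delta\mid p^5$ and $p\ge 11$, so that $q^{1/p^n}\in{\cal K}_n$ kills one of the two generators of $\Q_p^\ast/(\Q_p^\ast)^{p^n}$ --- and then bounds $I=(\Z/p^n\Z)[\mathrm{Gal}(K_n/\Q)]\,\Phi_n(x)$ by the order $p^{4n}$ of $M_2(\Z/p^n\Z)$. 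You instead keep the full rank-two Kummer group $\Q_p^\ast/(\Q_p^\ast)^{p^n}$, obtaining $\Phi_n(I_\p)\subset \hat{E}[p^n]_\p\otimes B$ with $B$ of rank at most $2$ and independent of $\p$, and then sum over $\p$ using that $W=\sum_\p \hat{E}[p^n]_\p$ sits inside $E[p^n]\cong(\Z/p^n\Z)^2$. Both counts give $p^{4n}$; yours does not need $p\nmid\mathrm{ord}_p(q)$, while the paper's local statement is sharper. (Combining the two observations --- the generator of the cyclic group $I_\P$ maps into $\hat{E}[p^n]_\p\otimes B$, hence is of the form $e\otimes b$, whose Galois orbit generates only $E[p^n]\otimes\langle b\rangle$ of order at most $p^{2n}$ --- would give $|I|\le p^{2n}$ and hence $\kappa_n\ge(2r-2)n$; neither you nor the paper exploits this.)

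Two points need firming up. First, the surjectivity of $\Phi_n$ is the hardest input and you only gesture at it; the paper obtains it from Serre's theorem ${\rm Gal}(K_1/\Q)\simeq{\rm GL}_2(\Z/p\Z)$ (valid here since $E$ is semistable of prime conductor and $p\ge 11$), the vanishing $H^1({\rm Gal}(K_1/\Q),E[p])=0$, and the irreducibility of $E[p]$ --- not from Mazur's classification --- together with the normalization $A+E(\Q)_{\rm tors}=E(\Q)$ ensuring the $P_j$ are independent in $E(\Q)/pE(\Q)$. Second, your Tate-curve computation is written only for split multiplicative reduction; in the non-split case (which occurs in the paper's own example at $p=5077$) the parameters $u_j$ live in the norm subgroup $H=\{x\in{\cal M}^\ast : {\rm N}_{{\cal M}/\Q_p}(x)\in q^{\Z}\}$ of the unramified quadratic extension ${\cal M}$, and one must check that $H/H^{p^n}$ still has rank at most $2$ (it does, cf.\ the paper's Lemma 3.3), so your bound survives. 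With these repairs the argument is complete.
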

\par
\bigskip
The organization of this paper is as follows. In \S 2, 
we briefly recall Bashmakov's result \cite{bas} from Lang \cite{L} and we investigate the degree $[L_n:K_n]$ of the extension $L_n$ over $K_n$. 
In \S 3, we investigate the degree of the $p$-adic completion of $L_n$ over the one of $K_n$, 
which is used for the estimate of the inertia group in \S. 4. 
We give the proof of Theorem \ref{1002a} in \S 4, 
and we give a numerical example of $\kappa_n$. 
In \S 5, 
we note the our result does not come from class number formula for the cyclotomic $\Z_p$-extension of $K_1$.
\par
\bigskip
The authors gratefully acknowledge helpful discussions with 
Professor K.\ Matsuno. 
The authors also wish to thank the referee for useful comments. 
\section{The extension $L_n$ over $K_n$}

In this section, we collect some results of Bashmakov \cite{bas} 
from Lang \cite{L}. 
We add some detail to the proof of  Lemma 1 of \cite[p.\ 117]{L}, which is one of the key lemmas.

Let the notations be the same as \S 1. 
Since $E$ is an elliptic curve with prime conductor $p$, 
$E$ is semistable and $p\ge 11$. 
Since $2$ is the minimal good prime of $E$ and $p>(\sqrt{2}+1)^2$, 
we have $\mbox{{\rm Gal}}(K_1/\Q)\simeq \mbox{{\rm GL}}_2(\Z/p\Z)$ 
by Corollaire 1 of Serre \cite{serre}. 
Since $p\ge 11$, 
${\rm Gal}(K_1/\Q) \simeq {\rm GL}_2(\Z/p\Z)$ implies ${\rm Gal}(K_n/\Q) \simeq {\rm GL}_2(\Z/p^n\Z)$ for all $n\ge 1$ (cf.\ \cite{serre}, IV, 3.4, Lemma 3).

We put $K_\infty:=\cup_{n \geq 1}K_n$ and $L_\infty:=\cup_{n \geq 1}L_n$. 
We have $\mbox{Gal}(K_\infty/\Q)\simeq\mbox{GL}_2(\Z_p)$ and $\{\Phi_n\}_{n\ge 1}$ induces an injective homomorphism 
$$
\Phi_\infty:~\mbox{Gal}(K_\infty/\Q)\rightarrow T_p(E)^r,
$$
where $T_p(E)$ is the $p$-adic Tate module. 

We put $H_n:=1+p^nM_2(\Z_p)\subset \mbox{GL}_2(\Z_p)$ for each $n\ge 1$. 
$K_n$ is the fixed subfield of the group $H_n$. 
%
%
%
% Lemma 2.1
%
\begin{Lem}
The equation $H_n^{p^n}=H_{2n}$ holds for each $n\ge 1$. 
\label{0812b}
\end{Lem}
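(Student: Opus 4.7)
The plan is to pass from the multiplicative group $H_n$ to the additive group $p^n M_2(\Z_p)$ via the $p$-adic logarithm. Since $p$ is odd and $n\ge 1$, the formal series
$$
\log(1+X)=\sum_{k\ge 1}(-1)^{k-1}\frac{X^{k}}{k},\qquad \exp(Y)=\sum_{k\ge 0}\frac{Y^{k}}{k!}
$$
converge on $p^{n}M_{2}(\Z_{p})$: the $k$-th term of $\log$ has $p$-adic valuation at least $nk-v_{p}(k)$, and of $\exp$ at least $nk-v_{p}(k!)\ge k\bigl(n-\tfrac{1}{p-1}\bigr)$, both tending to $\infty$ since $p\ge 3$. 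Because they are formal inverses as non-commutative power series in one variable, these series define mutually inverse bijections
$$
\log : H_{m}\;\stackrel{\sim}{\longrightarrow}\; p^{m}M_{2}(\Z_{p})
$$
for every $m\ge 1$.

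Next I would record the key identity $\log(X^{p^{n}})=p^{n}\log X$ for $X\in H_{n}$. This is immediate from the classical one-variable identity $\log((1+T)^{m})=m\log(1+T)$ applied inside the commutative $\Z_{p}$-subalgebra of $M_{2}(\Z_{p})$ topologically generated by $X$; no non-commutativity issue arises because $X$ and all its powers commute with each other. Dually, for $Y\in p^{n}M_{2}(\Z_{p})$ one has $\exp(Y)^{p^{n}}=\exp(p^{n}Y)$.

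With these two ingredients the lemma is immediate. Under $\log$, the $p^{n}$-th power map on $H_{n}$ corresponds to multiplication by $p^{n}$ on $p^{n}M_{2}(\Z_{p})$, whose image is $p^{2n}M_{2}(\Z_{p})=\log(H_{2n})$. Hence the set of $p^n$-th powers $\{X^{p^{n}}:X\in H_{n}\}$ coincides with $H_{2n}$; in particular it is already a subgroup of $H_n$, so it agrees with $H_{n}^{p^{n}}$ under either reading (set, or subgroup generated by $p^n$-th powers).

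The only technical points requiring care are the convergence estimates above and the formal-power-series identity $\exp\circ\log=\mathrm{id}$ in the non-commutative one-variable setting; both are standard and present no real obstacle, which is why I view this lemma as a routine filtration computation on the $p$-adic Lie group $\mathrm{GL}_{2}(\Z_{p})$. An alternative direct proof via binomial expansion of $(1+p^{n}A)^{p^{n}}$ would establish the inclusion $H_{n}^{p^{n}}\subset H_{2n}$ using $v_{p}\bigl(\binom{p^{n}}{k}p^{nk}\bigr)=n-v_{p}(k)+nk\ge 2n$ for $1\le k\le p^{n}$, and the reverse inclusion by a Hensel-type fixed-point argument on the map $A\mapsto p^{-2n}((1+p^{n}A)^{p^{n}}-1)$, but the logarithm approach is conceptually cleaner and places the identity in its natural Lie-theoretic context.
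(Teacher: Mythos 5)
Your proof is correct, but it follows a genuinely different route from the paper. The paper argues in two separate steps: the inclusion $H_n^{p^n}\subset H_{2n}$ comes from the elementary congruence $(1+p^nM)(1+p^nM')\equiv 1+p^n(M+M')\bmod p^{2n}$, and the reverse inclusion comes from writing down the binomial series $(1+p^{2n}M)^{1/p^n}=\sum_j\binom{1/p^n}{j}(p^{2n}M)^j$, checking its convergence by the same valuation estimate $\mathrm{ord}_p(j!)<j/(p-1)$ that you use, and observing that its value lies in $H_n$. Your logarithm/exponential argument subsumes both directions at once: once $\log:H_m\to p^mM_2(\Z_p)$ is known to be a bijection with inverse $\exp$ and $\log(X^{p^n})=p^n\log X$ is in hand, the statement reduces to $p^n\cdot p^nM_2(\Z_p)=p^{2n}M_2(\Z_p)$, and you correctly note that no homomorphism property of $\log$ is needed, only its behaviour on powers of a single element. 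What your approach buys is conceptual clarity (the lemma becomes the statement that the filtration of the $p$-adic Lie group matches the filtration of its Lie algebra under the $p$-power map) and the extra observation that the set of $p^n$-th powers is already a subgroup; what the paper's approach buys is self-containedness, since it needs only binomial coefficients and one valuation bound rather than the full package of convergence, mutual inversion, and image identification for $\log$ and $\exp$ on congruence subgroups. The only places where your write-up leans on "standard" facts without proof are the inclusions $\log(H_m)\subset p^mM_2(\Z_p)$ and $\exp(p^mM_2(\Z_p))\subset H_m$ (needed for bijectivity, and true for $p$ odd, $m\ge 1$ by the same estimates you state) and the rearrangement justifying $\exp\circ\log=\mathrm{id}$ on matrices; both are indeed routine in the ultrametric setting, so there is no gap.
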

\begin{proof}
For each $M$, $M^\prime$ in $M_2(\Z_p)$, 
we have 
$$
(1+p^nM)(1+p^nM^\prime)\equiv 1+p^n(M+M^\prime)\bmod{p^{2n}}.
$$
It follows from $(1+p^nM)^{p^n}\equiv 1\bmod{p^{2n}}$ that $H_n^{p^n} \subset H_{2n}$. 

For each $M$ in $M_2(\Z_p)$, we formally put 
\begin{equation}
(1+p^{2n}M)^{\frac{1}{p^n}}:=
\sum_{j\ge 0}
\begin{pmatrix}{\frac{1}{p^n}}\\j\end{pmatrix}
(p^{2n}M)^{j}
=1+p^nM+\frac{1-p^n}{2}p^{2n}M^2+\cdots
\label{0812a}
\end{equation}
where $\begin{pmatrix}{\frac{1}{p^n}}\\j\end{pmatrix}=\ds\frac{1}{j!}\prod_{i=0}^{j-1}(\frac{1}{p^n}-i)$. 
We see that 
$$
\mbox{{\rm ord}}_p(j!)=\sum_{i\ge 1}\bigg[\frac{j}{p^i}\bigg]
< \sum_{i \ge 1}\frac{j}{p^i}=\frac{j}{p-1},
$$
where $[\frac{j}{p^i}]$ is the maximal integer less than or equal to $\frac{j}{p^i}$. 
The coefficient of $M^j$ is equal to 
$$
\begin{pmatrix}{\frac{1}{p^n}}\\j\end{pmatrix}p^{2nj}
=\frac{1\cdot(1-p^n)\cdots(1-(j-1)p^n)}{j!}p^{nj}
$$
and we have
$$
\mbox{{\rm ord}}_p\bigg(\begin{pmatrix}{\frac{1}{p^n}}\\j\end{pmatrix}p^{2nj}\bigg)
\geq nj-\frac{j}{p-1}. 
$$
Since $p\neq 2$ and $n\ge 1$, thus the formal power series (\ref{0812a}) converges in $H_n$. 
This implies $H_{2n}\subset H_n^{p^n}$ and we have the assertion. 
\end{proof}

For the completion of the proof of Chapter V, Lemma 1 of \cite[p.\ 117]{L}, 
we have to check the following Lemma which is not mentioned there. 
%
%
% Lemma 2.2
%
%
\begin{Lem}
If ${\rm Gal}(K_1/\Q)\simeq {\rm GL}_2(\Z/p\Z)$, 
the equation $L_1\cap K_\infty=K_1$ holds. 
\label{0812c}
\end{Lem}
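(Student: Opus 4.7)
The plan is to reduce the claim to a representation-theoretic statement over $\mbox{GL}_2(\F_p)$, using Lemma \ref{0812b} to cut the problem down to the field $K_2$. Set $F := L_1 \cap K_\infty$. First I would show that $F \subseteq K_2$: the Galois group $\mbox{Gal}(F/K_1)$ is a quotient of $\mbox{Gal}(L_1/K_1)$, which via $\Phi_1$ embeds into $E[p]^r$, so it is abelian of exponent dividing $p$. Therefore $\mbox{Gal}(F/K_1)$ is a quotient of $H_1/(H_1^p \cdot [H_1, H_1])$. By Lemma \ref{0812b}, $H_1^p = H_2$, and a direct check gives $[H_1, H_1] \subseteq H_2$, so the maximal abelian exponent-$p$ quotient of $H_1$ is $H_1/H_2 \simeq \mbox{Gal}(K_2/K_1)$. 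Hence $F \subseteq K_2$.

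Next I would identify the relevant $\mbox{Gal}(K_1/\Q)$-module structures on both sides. The map $1 + pX \mapsto X \bmod p$ gives a $\mbox{GL}_2(\F_p)$-equivariant isomorphism $H_1/H_2 \simeq (M_2(\F_p),+)$, with $\mbox{GL}_2(\F_p) \simeq \mbox{Gal}(K_1/\Q)$ acting by conjugation --- that is, the adjoint representation. The map $\Phi_1$ itself is $\mbox{Gal}(K_1/\Q)$-equivariant: for any lift $\tau$ of $\gamma \in \mbox{Gal}(K_1/\Q)$ to $\mbox{Gal}(L_1/\Q)$ (which exists because $A \subseteq E(\Q)$ makes $L_1/\Q$ Galois), a short computation starting from $\Phi_1(\sigma) = ({}^\sigma T_j - T_j)_j$ yields $\Phi_1(\tau \sigma \tau^{-1}) = \gamma \cdot \Phi_1(\sigma)$, where $\gamma$ acts on $E[p]^r$ diagonally via the standard $2$-dimensional representation.

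Since $F/\Q$ is Galois, $\mbox{Gal}(F/K_1)$ is a $\mbox{GL}_2(\F_p)$-equivariant quotient of $M_2(\F_p)$ that embeds equivariantly into $E[p]^r$. For $p \geq 11$ odd, $M_2(\F_p) \simeq \F_p \cdot I \oplus \mathfrak{sl}_2(\F_p)$ as $\mbox{GL}_2(\F_p)$-modules, with irreducible constituents the trivial representation and the $3$-dimensional irreducible adjoint $\mathfrak{sl}_2(\F_p)$. Neither is isomorphic to the irreducible standard $2$-dimensional representation $E[p]$, so Schur's lemma implies that every $\mbox{GL}_2(\F_p)$-equivariant map from $M_2(\F_p)$ to $E[p]^r$ is zero. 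Therefore $\mbox{Gal}(F/K_1) = 1$ and $F = K_1$.

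The main technical step will be the equivariance check for $\Phi_1$ and the clean identification of $H_1/H_2$ with the adjoint representation; once those are in place, the representation-theoretic conclusion is immediate from Schur's lemma, since the source (adjoint) and the target (copies of the standard representation) share no irreducible constituents.
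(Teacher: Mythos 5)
Your proposal is correct and follows essentially the same route as the paper: reduce to $F\subseteq K_2$ using the exponent-$p$ property of $\mbox{Gal}(L_1/K_1)$ together with Lemma \ref{0812b} ($H_1^p=H_2$), identify $\mbox{Gal}(K_2/K_1)\simeq M_2(\Z/p\Z)$ with the conjugation (adjoint) action, and conclude by comparing irreducible constituents (dimensions $1$ and $3$) against copies of the $2$-dimensional irreducible $E[p]$. Your explicit use of Schur's lemma and the equivariance check for $\Phi_1$ just makes precise what the paper states more briefly.
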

\begin{proof}
We put $N:=L_1\cap K_\infty$. 
Then $N$ is a Galois extension of $\Q$ and $\mbox{Gal}(K_1/\Q)$ acts on $\mbox{Gal}(N/K_1)$. 

It follows from $\mbox{Gal}(K_1/\Q)\simeq \mbox{GL}_2(\Z/p\Z)$ that 
$E[p]$ is an irreducible $\mbox{Gal}(K_1/\Q)$-module. 
Since 
$$
\Phi_1:~\mbox{Gal}(L_1/K_1)\rightarrow E[p]^r
$$
is an injective $\mbox{Gal}(K_1/\Q)$-homomorphism, 
$\mbox{Gal}(L_1/K_1)$ is a direct sum of copies of $E[p]$. 
Thus $\mbox{Gal}(N/K_1)$ is isomorphic to a direct sum of copies of $E[p]$ as a  $\mbox{Gal}(K_1/\Q)$-module.

Since $\mbox{Gal}(N/K_1)\simeq H_1/\mbox{Gal}(K_\infty/N)$ is of exponent $p$ and thus we have 
$$
H_1\supset \mbox{Gal}(K_\infty/N) \supset H_1^p.
$$
By Lemma \ref{0812b}, we have $H_1^p=H_2$, 
hence $K_1 \subset N \subset K_2$. 
Since the kernel of the surjective homomorphism 
$$
M_2(\Z_p) \rightarrow H_1/H_2:~M \mapsto 1+pM
$$
is $pM_2(\Z_p)$, we have 
$$
\mbox{Gal}(K_2/K_1)\simeq M_2(\Z/p\Z).  
$$
Via this isomorphism, 
the action of $\mbox{Gal}(K_1/\Q)\simeq\mbox{GL}_2(\Z/p\Z)$ on $M_2(\Z/p\Z)$ is

given by the conjugation of matrices. 
The $\Z/p\Z[\mbox{Gal}(K_1/\Q)]$-module $M_2(\Z/p\Z)$ has the irreducible decomposition 
$$
M_2(\Z/p\Z)^{{\rm diag}}\oplus M_2(\Z/p\Z)^{{\rm tr}=0}, 
%\label{0128a}
$$
where $M_2(\Z/p\Z)^{{\rm diag}}$ means the submodule of diagonal matrices 
and $M_2(\Z/p\Z)^{{\rm tr}=0}$ means the submodule of matrices with trace zero.

$\mbox{Gal}(N/K_1)$ is the quotient of $\mbox{Gal}(K_2/K_1)$ by $\mbox{Gal}(K_2/N)$ as a $\mbox{Gal}(K_1/\Q)$-module. 
On the one hand, 
since $\mbox{Gal}(K_2/K_1) \simeq M_2(\Z/p\Z)$ is semisimple as a $\mbox{Gal}(K_1/\Q)$-module, 
the dimension over $\Z/p\Z$ of each irreducible quotient of $\mbox{Gal}(K_2/K_1)$ is 0, 1 or 3.  
On the other hand, 
since $\mbox{Gal}(N/K_1)$ is $\mbox{Gal}(K_1/\Q)$-isomorphic to a direct sum of  copies of $E[p]$, 
the dimension of its irreducible component is 0 or 2. 
Thus we have $\mbox{Gal}(N/K_1)=1$ giving $N=K_1$. 
This completes the proof.
\end{proof}
%
%
%
% Lemma 2.3
%
\begin{Lem}[\cite{L},~Chap.\ V, ~Lem.\ 1]
If the homomorphism 
$$\Phi_1:~{\rm Gal}(L_1/K_1) \rightarrow E[p]^r$$
is an isomorphism, then the homomorphism 
$$\Phi_\infty:~{\rm Gal}(L_\infty/K_\infty) \rightarrow T_p(E)^r$$
is an isomorphism, and for each $n$, 
$$\Phi_n:~{\rm Gal}(L_n/K_n) \rightarrow E[p^n]^r$$
is a homomorphism. 
\label{0812d}
\end{Lem}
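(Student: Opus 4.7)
The plan is to first prove that $\Phi_\infty$ is an isomorphism, and then deduce the isomorphism statement at each finite level $n$ as a consequence. Injectivity of $\Phi_\infty$ is immediate from the injectivity of the $\Phi_n$'s together with the compatibility of the Kummer construction across levels, so the real content is surjectivity of $\Phi_\infty$, after which the finite-level result falls out by a degree count.

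For surjectivity of $\Phi_\infty$, the natural tool is Nakayama's lemma for compact $\Z_p$-modules. Let $M:=\Phi_\infty(\mbox{Gal}(L_\infty/K_\infty))$, a closed $\Z_p$-submodule of the free $\Z_p$-module $T_p(E)^r$ of rank $2r$. It suffices to show that $M$ surjects onto $T_p(E)^r/pT_p(E)^r=E[p]^r$. On the Galois side, the reduction-mod-$p$ map $T_p(E)^r\twoheadrightarrow E[p]^r$ corresponds to the restriction map $\mbox{Gal}(L_\infty/K_\infty)\twoheadrightarrow\mbox{Gal}(L_1K_\infty/K_\infty)$. By Lemma \ref{0812c} we have $L_1\cap K_\infty=K_1$, so restriction gives an isomorphism $\mbox{Gal}(L_1K_\infty/K_\infty)\xrightarrow{\sim}\mbox{Gal}(L_1/K_1)$, and the resulting composition
$$\mbox{Gal}(L_\infty/K_\infty)\;\twoheadrightarrow\;\mbox{Gal}(L_1/K_1)\;\xrightarrow{\Phi_1}\;E[p]^r$$
coincides, by the compatibility of the Kummer pairings, with the reduction of $\Phi_\infty$ modulo $p$. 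Since $\Phi_1$ is surjective by hypothesis, $M$ surjects onto $E[p]^r$, and Nakayama yields $M=T_p(E)^r$.

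For the finite-level conclusion, once $\Phi_\infty$ is an isomorphism one computes $[L_nK_\infty:K_\infty]$ directly: an element $\sigma\in\mbox{Gal}(L_\infty/K_\infty)$ fixes $L_n$ precisely when ${}^\sigma T_j^{(n)}=T_j^{(n)}$ for all $j$, i.e.\ when $\Phi_\infty(\sigma)\in p^nT_p(E)^r$. Hence restriction induces an isomorphism
$$\mbox{Gal}(L_nK_\infty/K_\infty)\;\cong\;T_p(E)^r/p^nT_p(E)^r=E[p^n]^r,$$
so $[L_n:L_n\cap K_\infty]=p^{2rn}$. Combined with $[L_n:K_n]\le p^{2rn}$, which is forced by the injectivity of $\Phi_n$, and the chain of inclusions $K_n\subset L_n\cap K_\infty\subset L_n$, one concludes that $L_n\cap K_\infty=K_n$ and that $\Phi_n$ is an isomorphism.

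The main obstacle, and the reason the preceding Lemma \ref{0812c} is needed, is the Nakayama step: without knowing $L_1\cap K_\infty=K_1$ one would only get an injection $\mbox{Gal}(L_1K_\infty/K_\infty)\hookrightarrow\mbox{Gal}(L_1/K_1)$, and the hypothesis on $\Phi_1$ could not be translated into surjectivity of $M\to E[p]^r$. Everything else in the argument is a formal manipulation of the Kummer tower, and the generalization $L_n\cap K_\infty=K_n$ for $n\ge 2$ comes as a free byproduct rather than requiring a separate induction.
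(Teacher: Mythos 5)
Your proposal is correct and follows essentially the same route as the paper: both use Lemma \ref{0812c} to identify $\mathrm{Gal}(L_1K_\infty/K_\infty)$ with $\mathrm{Gal}(L_1/K_1)$, deduce surjectivity of $\Phi_\infty$ by a Nakayama-type argument on the closed image in $T_p(E)^r$ (the paper delegates this step to Lang, Chap.~V, Lemma~2), and then obtain $L_n\cap K_\infty=K_n$ and the finite-level isomorphism from the same order count $\sharp E[p^n]^r=\sharp\,\mathrm{Gal}(L_n/L_n\cap K_\infty)\le\sharp\,\mathrm{Gal}(L_n/K_n)\le\sharp E[p^n]^r$. The only cosmetic difference is that you compute $\mathrm{Gal}(L_nK_\infty/K_\infty)$ directly as $T_p(E)^r/p^nT_p(E)^r$ where the paper reads off surjectivity of the bottom arrow from a second commutative diagram.
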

\begin{proof}
We have a commutative diagram 
$$
\begin{array}{ccc}
\mbox{Gal}(L_\infty/K_\infty) &\rightarrow &T_p(E)^r\\
\downarrow &&\downarrow\\
\mbox{Gal}(L_1/L_1 \cap K_\infty) &\rightarrow &E[p]^r
\end{array}
$$
The vertical arrows are projections and they are surjective. 
Since $\mbox{Gal}(L_1/L_1 \cap K_\infty)=\mbox{Gal}(L_1/K_1)$ by Lemma \ref{0812c}, 
we may apply the assumption on the bottom arrow. 
Since the composite homomorphism from $\mbox{Gal}(L_\infty/K_\infty)$ to $E[p]^r$ is surjective, 
we see that the top arrow $\Phi_\infty$ is an isomorphism 
(For more detail the reader refers Chap.\ V, Lemma 2 of \cite{L}). 

For a given $n$, we have a commutative diagram 
$$
\begin{array}{ccc}
\mbox{Gal}(L_\infty/K_\infty) &\rightarrow &T_p(E)^r\\
\downarrow &&\downarrow\\
\mbox{Gal}(L_n/L_n \cap K_\infty) &\rightarrow &E[p^n]^r
\end{array}
$$
Since the vertical projections and the top arrow $\Phi_\infty$ are surjective, 
the bottom arrow is surjective. 
Since the bottom arrow is the restriction of $\Phi_n$ to $\mbox{Gal}(L_n/L_n \cap K_\infty)$, it is injective and thus an isomorphism. 

We have
$$
\sharp E[p^n]^r = \sharp \mbox{Gal}(L_n/L_n \cap K_\infty) \le 
\sharp \mbox{Gal}(L_n/K_n)\le \sharp E[p^n]^r. 
$$
Hence we have $L_n \cap K_\infty=K_n$ and $\Phi_n$ is an isomorphism. 
\end{proof}

By using Lemma \ref{0812d} and 
\begin{equation}
H^1(\mbox{Gal}(K_1/\Q),E[p])=0
\label{120812f}
\end{equation}
(cf.\ \cite[Chap.\ V,\,Thm.\ 5.1]{L}), 
the result of Bashmakov \cite{bas} is given as below. 
%
%
%
% Theorem 2.4
%
\begin{Thm}[\cite{L},Chap.\ V, Thm.\ 5.2, Step 1 and 2]
\label{120812g}
Assume that \\${\rm Gal}(K_1/\Q)\simeq {\rm GL}_2(\Z/p\Z)$, 
and that $A+E(\Q)_{{\rm tors}}=E(\Q)$. Then 
%If ${\rm Gal}(K_1/\Q)\cong {\rm GL}_2(\Z/p\Z)$, then 
$\Phi_1$ is an isomorphism. Thus, 
$\Phi_n$ is an isomorphism for all $n$. 
\end{Thm}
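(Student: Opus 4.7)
The second assertion is immediate from Lemma \ref{0812d}, so my task reduces to showing that $\Phi_1$ is an isomorphism. Since $\Phi_1$ is injective by its very definition, only surjectivity remains, and my plan is to derive it from Kummer theory together with the vanishing hypothesis (\ref{120812f}).

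First, I would note that $L_1/\Q$ is Galois, so $\mbox{Gal}(K_1/\Q)$ acts on $\mbox{Gal}(L_1/K_1)$ by conjugation, and $\Phi_1$ is $\mbox{Gal}(K_1/\Q)$-equivariant with respect to the diagonal action on $E[p]^r$. Under $\mbox{Gal}(K_1/\Q)\simeq\mbox{GL}_2(\Z/p\Z)$ the module $E[p]$ is absolutely irreducible over $\F_p$ (its $\F_p[\mbox{Gal}(K_1/\Q)]$-endomorphism ring reduces to $\F_p$), so by Schur's lemma every $\mbox{Gal}(K_1/\Q)$-submodule of $E[p]^r$ has the form $W\otimes_{\F_p}E[p]$ for some subspace $W\subset\F_p^r$. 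Consequently $\Phi_1$ is surjective if and only if the $r$ component maps $\psi_j:\mbox{Gal}(L_1/K_1)\to E[p]$, $\sigma\mapsto{}^\sigma T_j-T_j$, are $\F_p$-linearly independent in $\hom{\mbox{Gal}(L_1/K_1)}{E[p]}$.

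I would then reinterpret this via Kummer theory. For any subfield $F\subset\bQ$ the Kummer sequence $0\to E[p]\to E\to E\to 0$ gives an injection $\delta_F:E(F)/pE(F)\hookrightarrow H^1(F,E[p])$, and since $E[p]\subset E(K_1)$ the class $\delta_{K_1}(P_j)$ lies in $\hom{\mbox{Gal}(\bQ/K_1)}{E[p]}$ and factors through $\mbox{Gal}(L_1/K_1)$, where it coincides with $\psi_j$. Thus the required independence of the $\psi_j$ is exactly the $\F_p$-linear independence of $P_1,\ldots,P_r$ in $E(K_1)/pE(K_1)$. Applying the inflation-restriction sequence to $K_1/\Q$ and combining with (\ref{120812f}) makes the restriction $H^1(\Q,E[p])\hookrightarrow H^1(K_1,E[p])$ injective, so it is enough to check linear independence of $P_1,\ldots,P_r$ already in $E(\Q)/pE(\Q)$.

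To finish, I would observe that $\mbox{Gal}(K_n/\Q)\simeq\mbox{GL}_2(\Z/p^n\Z)$ forces $E[p^n]^{\mbox{Gal}(K_n/\Q)}=0$ for every $n$, whence $p\nmid\#E(\Q)_{\mbox{tors}}$. Tensoring the split-off-torsion sequence $0\to E(\Q)_{\mbox{tors}}\to E(\Q)\to\Z^r\to 0$ with $\Z/p\Z$ then yields $E(\Q)/pE(\Q)\simeq(\Z/p\Z)^r$, with the classes of $P_1,\ldots,P_r$ as an $\F_p$-basis, which is precisely what is needed. The only genuinely non-routine input in this plan is the vanishing (\ref{120812f}) of $H^1(\mbox{GL}_2(\F_p),E[p])$, a classical but non-trivial cohomology computation cited from Lang; every remaining step is a formal consequence of the Kummer sequence, Schur's lemma, and elementary torsion bookkeeping.
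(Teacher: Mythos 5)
Your proof is correct and follows essentially the same route as the paper's: both arguments reduce the surjectivity of $\Phi_1$ to the injectivity of the Kummer map $A/[p]_E(A)\to \mathrm{Hom}(\mathrm{Gal}(L_1/K_1),E[p])$, deduce that injectivity from the vanishing (\ref{120812f}) together with the absence of rational $p$-torsion, and then conclude via the irreducibility of $E[p]$ as a $\mathrm{Gal}(K_1/\Q)$-module, finishing with Lemma \ref{0812d}. The only differences are presentational: you invoke inflation--restriction and Schur's lemma where the paper manipulates the cocycles explicitly and counts equivariant homomorphisms.
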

\begin{proof}
For $P$ in $A$, 
we define the mapping 
$$
\delta_P:~\mbox{Gal}(L_1/K_1) \rightarrow E[p]:~\sigma 
\mapsto {}^\sigma T-T,
$$
where $T$ is a point in $E(L_1)$ such that $[p]_E(T)=P$. 

Clearly if $P$ is in $[p]_E(A)$, then $\delta_P$ is trivial. 
Conversely, 
let us show that $\ker\{ P \mapsto \delta_P\}=[p]_E(A)$. 
Suppose $\delta_P$ is trivial. 
Then $T$ is in $E(K_1)$ and 
$$
\mbox{Gal}(K_1/\Q) \rightarrow E[p]:~\sigma \mapsto {}^\sigma T-T
$$
is in $H^1(\mbox{Gal}(K_1/\Q),E[p])$. 
It follows from (\ref{120812f}) 
that there exists $T^\prime$ in $E[p]$ such that 
$$
{}^\sigma T-T={}^\sigma T^\prime-T^\prime
$$
for each $\sigma$ in $\mbox{Gal}(K_1/\Q)$. 
Thus we have $T-T^\prime$ is in $E(\Q)$. 
But $P=[p]_E(T-T^\prime)$, 
hence $P$ is $p$-th multiple in $E(\Q)$. 

Since $\mbox{Gal}(K_1/\Q)\simeq\mbox{GL}_2(\Z/p\Z)$, 
there are no non-trivial $p$-torsion points in $E(\Q)$. 
Since $A$ is a torsion-free module such that $E(\Q)=A+E(\Q)_{{\rm tors}}$, 
any point that is in $[p]_E(A)$ and also in $E(\Q)$ also belongs to $A$ to start with. 
Thus $T-T^\prime$ is in $A$ and $P$ is in $[p]_E(A)$. 

Hence the mapping 
$$A/[p]_E(A)\rightarrow \mbox{Hom}(\mbox{Gal}(L_1/K_1),E[p]):~P\mapsto \delta_P$$
is injective. 
On the one hand, 
consider the fundamental short exact sequence 
$E(\Q)/[p]_E E(\Q)\stackrel{\delta'}{\hookrightarrow} H^1(\mbox{Gal}(L_1/\Q),E[p])$ (cf.\ p.\ 331 of \cite{sil}). 
By (\ref{120812f}) we see that 
$H^1(\mbox{Gal}(L_1/\Q),E[p])\simeq \mbox{Hom}_{{\small \mbox{Gal}(K_1/\Q)}}(\mbox{Gal}(L_1/K_1),E[p])$. 
By the assumption the natural map $A/[p]_E(A)\stackrel{\iota}{\lra} E(\Q)/[p]_E E(\Q)$ is injective. 
Then the composite of $\iota$ and $\delta'$ is nothing but $\delta$. 
Therefore the image of $\delta$ lies in 
$\mbox{Hom}_{{\small \mbox{Gal}(K_1/\Q)}}(\mbox{Gal}(L_1/K_1),E[p])$. 

We note 
\begin{equation}
\sharp \mbox{Hom}_{{\small \mbox{Gal}(K_1/\Q)}}(\mbox{Gal}(L_1/K_1),E[p])\geq p
^r.
\label{120812h}
\end{equation}
because of rank$_{\Z/p\Z}A/[p]_E(A)=r$. 
Now the image of $\Phi_1$ is a $\mbox{Gal}(K_1/\Q)$-submodule of $E[p]^r$ 
and it is $\mbox{Gal}(K_1/\Q)$-isomorphic to a direct sum of copies of $E[p]$. 
By (\ref{120812h}) the number of elements in this sum must be $r$. 
Thus $\Phi_1$ is surjective. 
By Lemma \ref{0812d} we have the assertion. 
\end{proof}

We recall that we take $A$ such that $A+E(\Q)_{{\rm tors}}=E(\Q)$ in \S.\ 1. 
We have the following corollary from Theorem \ref{120812g}. 

\begin{Cor}
The equation $[L_n:K_n]=p^{2nr}$ holds for all $n \ge 1$. 
\label{0813a}
\end{Cor}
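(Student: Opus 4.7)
The plan is to read this off directly from Theorem \ref{120812g}. That theorem asserts, under the running hypotheses of the paper (prime conductor $p \ge 11$, $p \neq 13$, and the choice of $A$ with $A + E(\Q)_{\rm tors} = E(\Q)$), that the injection
$$\Phi_n : \mbox{Gal}(L_n/K_n) \longrightarrow E[p^n]^r$$
is in fact an isomorphism for every $n \ge 1$.

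Granting this, the computation of the degree is a counting exercise. First I would note that as an abelian group $E[p^n] \cong (\Z/p^n\Z)^2$, so $\sharp E[p^n]^r = p^{2nr}$. Then, since $L_n/K_n$ is a Galois extension (being generated over $K_n$ by preimages under $[p^n]_E$ of the rational points $P_1, \ldots, P_r$, together with all of $E[p^n] \subset K_n$), one has
$$[L_n : K_n] = \sharp \mbox{Gal}(L_n/K_n) = \sharp E[p^n]^r = p^{2nr},$$
where the middle equality uses the isomorphism $\Phi_n$ from Theorem \ref{120812g}.

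The only thing to verify is that the hypotheses of Theorem \ref{120812g} are indeed in force: the condition $A + E(\Q)_{\rm tors} = E(\Q)$ is built into the choice of $A$ at the start of \S 1, and the condition ${\rm Gal}(K_1/\Q) \simeq {\rm GL}_2(\Z/p\Z)$ was established at the beginning of \S 2 using Serre's result (since $E$ is semistable with $p \ge 11 > (\sqrt{2}+1)^2$ and $2$ is a good prime of $E$). There is no real obstacle here; the corollary is essentially a restatement of Theorem \ref{120812g} in terms of field degrees.
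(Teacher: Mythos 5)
Your proposal is correct and matches the paper's own (implicit) argument: the corollary is stated as an immediate consequence of Theorem \ref{120812g}, whose hypotheses are exactly the ones you check, and the degree count $\sharp E[p^n]^r = p^{2nr}$ is the only remaining step.
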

\section{The localization of the extension $L_n$ over $K_n$}

Fix a natural number $n$. 
Put ${\cal K}_n:=\Q_p(E[p^n])$ 
and let $\p$ be the prime ideal of ${\cal K}_n$. 
Let $A$ be a free abelian subgroup of $E({\cal K}_n)$ of rank $r\ge 1$. 
Put ${\cal L}_n:={\cal K}_n([p^n]_E^{-1}A)$ and let $\P$ be the prime ideal of ${\cal L}_n$. 

In this section we investigate the degree $[{\cal L}_n:{\cal K}_n]$. 

We denote the generators of $A$ by $P_1,\ldots,P_r$. 
For each $j\in \{1,\ldots,r\}$ we take $T_j$ in $E({\cal L}_n)$ such that $[p^n]_E(T_j)=P_j$. 
The injectivity of the mapping
$$
\Phi:~\mbox{Gal}({\cal L}_n/{\cal K}_n) \rightarrow E[p^n]^{r}~:~
\sigma \mapsto ({}^\sigma T_j- T_j)_j
$$
shows that $[{\cal L}_n:{\cal K}_n]$ divides $p^{2nr}$. 
\subsection{$P_j$, $T_j$ via Tate curve}

In this subsection we refer the reader to \cite[Ch.\ V, \S.\ 3]{Sil2} for properties on Tate curves. 

Since $E$ over $\Q_p$ has multiplicative reduction at $p$, 
there exists $q$ in $p\Z_p$ such that $E$ is isomorphic over ${\cal M}$ to the Tate curve $E_q$ for some extension ${\cal M}$ over $\Q_p$ of degree at most two. 
We see that $[{\cal M}:\Q_p]=1$ or $2$ 
according as the type of the multiplicative reduction is split or non-split. 
We denote the isomorphism from $E$ to $E_q$ by $\varphi$. 
We note that there exists a surjective $\mbox{Gal}(\bQ_p/\Q_p)$-homomorphism $\phi$ from $\bQ_p^\ast$ to $E_q(\bQ_p)$ with the kernel $q^{\Z}$. 

Since $[{\cal L}_n:{\cal K}_n]$ is a power of an odd prime $p$ 
and $[{\cal L}_n{\cal M}:{\cal K}_n]$ is at most $2$, 
we have ${\cal L}_n\cap {\cal K}_n{\cal M}={\cal K}_n$. 
Thus we have $[{\cal L}_n:{\cal K}_n]=[{\cal L}_n{\cal M}:{\cal K}_n{\cal M}]$. 
For the aim of this section it is enough to investigate the degree $[{\cal L}_n{\cal M}:{\cal K}_n{\cal M}]$. 

For each $j$ there exists an element $t_j$ in $({\cal L}_n{\cal M})^\ast$ (resp.\ $p_j$ in $({\cal K}_n{\cal M})^\ast$) such that 
$$
\varphi(T_j)=\phi(t_j)\quad (\mbox{resp.}~\varphi(P_j)=\phi(p_j)).
$$
Since $\varphi$ is an isomorphism over ${\cal M}$, 
we have
$$
{\cal L}_n{\cal M}={\cal K}_n{\cal M}(T_1,\ldots,T_{r})={\cal K}_n{\cal M}(\varphi(T_1),\ldots,\varphi(T_{r})).
$$
Since $\phi$ is a $\mbox{Gal}(\bQ_p/\Q_p)$-homomorphism, 
we have 
$$
{\cal K}_n{\cal M}(\varphi(T_1),\ldots,\varphi(T_{r}))=
{\cal K}_n{\cal M}(\phi(t_1),\ldots,\phi(t_{r}))\subset 
{\cal K}_n{\cal M}(t_1,\ldots,t_{r}) \subset 
{\cal L}_n{\cal M}.
$$
Thus we have 
$$
{\cal L}_n{\cal M}={\cal K}_n{\cal M}(t_1,\ldots,t_{r}). 
$$

Furthermore we have 
$$
\varphi(P_j)=\varphi([p^n]_E(T_j))=[p^n]_{E_q}(\varphi(T_j))=
[p^n]_{E_q}(\phi(t_j))=\phi(t_j^{p^n}).
$$
Thus we have
$$
t_j^{p^n}=p_j q^{m}
$$
for some $m$ in $\Z$. Since $E_q[p^n]=\langle \zeta_{p^n},q^{\frac{1}{p^n}}\rangle\subset E_q({\cal K}_n{\cal M})$, we have 
$$
{\cal K}_n{\cal M}(t_1,\ldots,t_{r})=
{\cal K}_n{\cal M}(p_1^{\frac{1}{p^n}},\ldots,
p_{r}^{\frac{1}{p^n}}). 
$$
Thus we have $[{\cal L}_n{\cal M}:{\cal K}_n{\cal M}]$ divides $p^{nr}$ and so does $[{\cal L}_n:{\cal K}_n]$.  
%
%
%
% Lemma 3.1
%
\begin{Lem}
Assume that $E$ has multiplicative reduction at $p$. 
Then, 
the equation ${\cal L}_n{\cal M}={\cal K}_n{\cal M}(p_1^{\frac{1}{p^n}},\ldots,p_{r}^{\frac{1}{p^n}})$ holds. 
Specially, 
$[{\cal L}_n:{\cal K}_n]$ divides $p^{nr}$
\label{1228c}
\end{Lem}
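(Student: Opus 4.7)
The proof essentially collects the computations already carried out in the paragraphs preceding the statement. The plan is to pass to the (at most quadratic) extension ${\cal M}/\Q_p$ on which $E$ is isomorphic to the Tate curve $E_q$, exploit Galois-equivariance of the Tate parameterization $\phi:\bQ_p^\ast\twoheadrightarrow E_q(\bQ_p)$ with kernel $q^{\Z}$, and then descend back to ${\cal K}_n$.

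First I would verify that enlarging by ${\cal M}$ is harmless. Since $\Phi$ is injective, $[{\cal L}_n:{\cal K}_n]$ is a power of $p$, whereas $[{\cal K}_n{\cal M}:{\cal K}_n]\leq 2$. Hence ${\cal L}_n$ and ${\cal K}_n{\cal M}$ are linearly disjoint over ${\cal K}_n$, so $[{\cal L}_n:{\cal K}_n]=[{\cal L}_n{\cal M}:{\cal K}_n{\cal M}]$ and it suffices to identify ${\cal L}_n{\cal M}$. Using the isomorphism $\varphi:E\to E_q$ defined over ${\cal M}$, I would pick preimages $p_j\in({\cal K}_n{\cal M})^\ast$ of $\varphi(P_j)$ and $t_j\in({\cal L}_n{\cal M})^\ast$ of $\varphi(T_j)$ under $\phi$; since $\phi$ is $\mbox{Gal}(\bQ_p/{\cal M})$-equivariant and $\ker\phi=q^{\Z}\subset({\cal K}_n{\cal M})^\ast$, this gives ${\cal L}_n{\cal M}={\cal K}_n{\cal M}(t_1,\ldots,t_r)$.

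Next I would translate the relation $[p^n]_E(T_j)=P_j$ through $\varphi$ and $\phi$. Because $\phi$ intertwines $[p^n]_{E_q}$ with the $p^n$-th power map on $\bQ_p^\ast$, this yields $t_j^{p^n}=p_j q^{m_j}$ for some $m_j\in\Z$. Since $E_q[p^n]=\langle\zeta_{p^n},q^{1/p^n}\rangle$ lies in $E_q({\cal K}_n{\cal M})$, both $\zeta_{p^n}$ and $q^{1/p^n}$ already belong to ${\cal K}_n{\cal M}$. Hence $t_j$ differs from a choice of $p_j^{1/p^n}$ by an element of ${\cal K}_n{\cal M}$, giving the desired equality ${\cal L}_n{\cal M}={\cal K}_n{\cal M}(p_1^{1/p^n},\ldots,p_r^{1/p^n})$. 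The divisibility $[{\cal L}_n:{\cal K}_n]\mid p^{nr}$ is then immediate from Kummer theory applied to a radical extension of a field containing $\zeta_{p^n}$.

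The only mildly delicate point is the Galois-theoretic bookkeeping in the second step: namely, that a $\phi$-preimage of $\varphi(T_j)$ can indeed be chosen inside ${\cal L}_n{\cal M}$ and that two such choices generate the same extension of ${\cal K}_n{\cal M}$. Both reduce to the observation that $\ker\phi=q^{\Z}\subset({\cal K}_n{\cal M})^\ast$, after which the remainder of the argument is a routine radical-extension computation.
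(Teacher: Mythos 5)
Your proposal is correct and follows essentially the same route as the paper, which likewise passes to ${\cal M}$, notes that ${\cal L}_n\cap{\cal K}_n{\cal M}={\cal K}_n$ because $[{\cal L}_n:{\cal K}_n]$ is a power of the odd prime $p$, lifts $P_j$ and $T_j$ through the Tate parameterization to get $t_j^{p^n}=p_jq^{m}$, and uses $E_q[p^n]=\langle\zeta_{p^n},q^{1/p^n}\rangle\subset E_q({\cal K}_n{\cal M})$ to conclude ${\cal L}_n{\cal M}={\cal K}_n{\cal M}(p_1^{1/p^n},\ldots,p_r^{1/p^n})$ and the divisibility by Kummer theory. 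No substantive differences.
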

\subsection{The case of $A\subset E(\Q_p)$}

In this subsection we assume that $A \subset E(\Q_p)$ and 
we modify Lemma \ref{1228c}. 

At first we consider the case of split multiplicative reduction. 
Then we have $p_j \in \Q_p^\ast$ and ${\cal M}=\Q_p$. 
%
%
%
% Lemma 3.2
%
\begin{Lem}
Assume that $E$ has split multiplicative reduction at $p$ 
and $A \subset E(\Q_p)$. 
If $p \nmid \mbox{ord}_p(q)$, 
we have ${\cal L}_n \subset {\cal K}_n(\sqrt[p^n]{1+p})$ holds. 
Namely ${\cal L}_n/{\cal K}_n$ is a cyclic extension 
and $[{\cal L}_n:{\cal K}_n]$ divides $p^n$. 
\label{0110a}
\end{Lem}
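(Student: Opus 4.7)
The plan is to unpack Lemma \ref{1228c} in the split multiplicative case and combine it with Kummer theory and the explicit structure of the principal unit group $1+p\Z_p$. In the split case ${\cal M}=\Q_p$, so Lemma \ref{1228c} gives
$$
{\cal L}_n \;=\; {\cal K}_n\bigl(p_1^{1/p^n},\ldots,p_r^{1/p^n}\bigr)
$$
with each $p_j \in \Q_p^\ast$. Two facts about ${\cal K}_n$ are used throughout. First, $\mu_{p^n}\subset {\cal K}_n$ by Galois-equivariance of the Weil pairing on $E[p^n]$. Second, $q^{1/p^n}\in {\cal K}_n$: indeed $\phi(q^{1/p^n})\in E_q[p^n]\subset E_q({\cal K}_n)$, so for any $\sigma\in \mbox{{\rm Gal}}(\bQ_p/{\cal K}_n)$ the ratio $\sigma(q^{1/p^n})/q^{1/p^n}\in \ker\phi=q^{\Z}$ has $p$-adic valuation $0$ and so must equal $1$. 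It therefore suffices to show that for each $j$ some $p^n$-th root of $p_j$ lies in ${\cal K}_n(\sqrt[p^n]{1+p})$.

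The first step is a normalization exploiting $p\nmid s$, where $s:=\mbox{{\rm ord}}_p(q)$. Since $s$ is a unit modulo $p^n$, for each $j$ I choose $m_j\in\Z$ with $\mbox{{\rm ord}}_p(p_j)+m_j s\equiv 0\pmod{p^n}$. Replacing $p_j$ by $p_j q^{m_j}$ changes ${\cal K}_n(p_j^{1/p^n})$ only by adjoining $(q^{1/p^n})^{m_j}\in {\cal K}_n$; after this I may write
$$
p_j \;=\; p^{p^n b_j}\,\zeta_j\,u_j,\qquad b_j\in\Z,\ \zeta_j\in\mu_{p-1},\ u_j\in 1+p\Z_p.
$$

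Now the factor $p^{p^n b_j}$ is a $p^n$-th power in $\Q_p^\ast$, and since $\gcd(p-1,p^n)=1$ the $p^n$-power map is a bijection on $\mu_{p-1}$, so $\zeta_j$ admits a $p^n$-th root inside $\mu_{p-1}\subset {\cal K}_n$. Together with $\mu_{p^n}\subset {\cal K}_n$ this yields ${\cal K}_n(p_j^{1/p^n})={\cal K}_n(u_j^{1/p^n})$. Finally, for odd $p$ the principal unit group $1+p\Z_p$ is a pro-$p$ group topologically isomorphic to $\Z_p$ and generated by $1+p$ (via the $p$-adic logarithm), whence
$$
(1+p\Z_p)\big/(1+p\Z_p)^{p^n}\;\cong\;\Z/p^n\Z
$$
is cyclic, generated by the class of $1+p$. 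Thus $u_j=(1+p)^{k_j}v_j^{p^n}$ for some $k_j\in\Z$, $v_j\in 1+p\Z_p$, and any $p^n$-th root of $u_j$ is of the form $\zeta\,(1+p)^{k_j/p^n}\,v_j$ with $\zeta\in\mu_{p^n}$, so it lies in ${\cal K}_n(\sqrt[p^n]{1+p})$.

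Combining the steps gives ${\cal L}_n\subset {\cal K}_n(\sqrt[p^n]{1+p})$, and the latter is cyclic of degree dividing $p^n$ by Kummer theory (using $\mu_{p^n}\subset {\cal K}_n$). The main piece of bookkeeping is the normalization step: the hypothesis $p\nmid\mbox{{\rm ord}}_p(q)$ is precisely what allows $\mbox{{\rm ord}}_p(p_j)$ to be cleared modulo $p^n$ using the $q^{\Z}$-ambiguity, reducing the problem to principal units where the one-dimensional structure of $(1+p\Z_p)/(1+p\Z_p)^{p^n}$ delivers cyclicity; without this hypothesis the valuation contribution cannot be absorbed and one only recovers the weaker bound of Lemma \ref{1228c}.
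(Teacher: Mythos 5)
Your argument is correct and is essentially the paper's own proof, just with the details spelled out: the paper likewise reduces via Lemma \ref{1228c} to $p_j\in\Q_p^\ast$, uses $\Q_p^\ast/(\Q_p^\ast)^{p^n}\simeq\langle p\rangle\times\langle 1+p\rangle$, and invokes $p\nmid\mathrm{ord}_p(q)$ to replace the generator $p$ by $q$ (your ``normalization'' step) together with $q^{1/p^n}\in{\cal K}_n$ to land inside ${\cal K}_n(\sqrt[p^n]{1+p})$. Your explicit justification that $q^{1/p^n}\in{\cal K}_n$ and the bookkeeping with $\mu_{p-1}$ are welcome additions, but the route is the same.
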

\begin{proof}
By  Lemma \ref{1228c}, 
we have
$$
{\cal K}_n(T_1,\ldots,T_r)=
{\cal K}_n(p_1^{\frac{1}{p^n}},\ldots,p_r^{\frac{1}{p^n}}) 
$$ 
for some $p_j$ in $\Q_p^\ast$. 
It follows from 
$$
\Q_p^\ast \simeq \langle p \rangle \times \mu_{p-1}\times (1+p\Z_p) 
\simeq \Z \times \Z/(p-1)\Z \times \Z_p
$$
that
$$
\Q_p^\ast/(\Q_p^\ast)^{p^n}=\langle p\rangle\times 
\langle1+p\rangle\simeq (\Z/p^n\Z)^2. 
$$
It follows from $p \nmid \mbox{ord}_p(q)$ 
that we can replace the generator $p$ by $q$, namely, 
$$
\langle p\rangle\times 
\langle1+p\rangle \simeq \langle q\rangle\times 
\langle1+p\rangle.
$$
Since $q^{\frac{1}{p^n}}$ is in ${\cal K}_n$, 
we have 
$$
{\cal K}_n(T_1,\ldots,T_r)\subset 
{\cal K}_n(\sqrt[p^n]{1+p}). 
$$
This completes the proof. 
\end{proof}

Secondly we consider the case of non-split multiplicative reduction. 
Then we have $p_j \in \{ x \in {\cal M}^\ast~|~{\rm N}_{{\cal M}/\Q_p}(x)\in q^{\Z}\}$ 
and ${\cal M}$ is an unramified quadratic extension over $\Q_p$ 
(cf.\ \cite{sil}, p.\ 357, Thm.\ 14.1). 
We put 
$$
H:=\{x \in {\cal M}^\ast~|~{\rm N}_{{\cal M}/\Q_p}(x) \in q^{\Z}\}.
$$
We discuss about the group structure of $H/H^{p^n}$. 

Since ${\cal M}$ is an unramified extension of $\Q_p$, 
there exists no non-trivial $p$-th power root of unity in ${\cal M}$. 
We denote the unit group of ${\cal M}$ (resp.\ $\Q_p$) by $U_{{\cal M}}$ (resp.\ $U_{\Q_p}$). 
Since ${\cal M}$ is unramified over $\Q_p$, 
we have the exact sequence 
\begin{equation}
1 \rightarrow U_{{\cal M},1} \rightarrow U_{{\cal M}} \mapright{{\rm N}_{{\cal M}/\Q_p}} U_{\Q_p} 
\rightarrow 1
\label{120318b}
\end{equation}
for a subgroup $U_{{\cal M},1}$ of $U_{{\cal M}}$. 
The group $U_{{\cal M},1}$ is a subgroup of $H$. 

Since ${\cal M}$ is unramified over $\Q_p$, 
the order $\mbox{ord}_p({\rm N}_{{\cal M}/\Q_p}(x))$ is even 
for any $x$ in ${\cal M}^\ast$. 
Thus, 
if $\mbox{ord}_p(q)$ is odd, 
there is no element $x$ in ${\cal M}$ such that ${\rm N}_{{\cal M}/\Q_p}(x)=q$. Since ${\rm N}_{{\cal M}/\Q_p}(q)=q^2$, 
we have $H= q^{\Z} \times U_{{\cal M},1}$. 

If $\mbox{ord}_p(q)=\nu$ is even, 
then $qp^{-\nu}$ is in $U_{\Q_p}$ 
and there exists $u$ in $U_{{\cal M}}$ such that 
${\rm N}_{{\cal M}/\Q_p}(u)=qp^{-\nu}$. 
We have ${\rm N}_{{\cal M}/\Q_p}(up^{\frac{\nu}{2}})=q$ 
and thus $H=\langle up^{\frac{\nu}{2}} \rangle \times U_{{\cal M},1}$. 

We put 
\begin{equation}
H_0:=q^{\Z} \times U_{{\cal M},1}.
\label{120318a}
\end{equation}
Then the index $[H:H_0]=1, 2$ 
according as the parity of $\mbox{ord}_p(q)$. 
If $[H:H_0]=2$, 
there exists an element $t$ in $H$ such that ${\rm N}_{{\cal M}/\Q_p}(t)=q$. 
Either $x$ or $xt^{p^h}$ is in $H_0$. 
Thus we have 
$$
H/H^{p^n}\simeq H_0/H_0^{p^n}. 
$$
Therefore, we have 
$$
H/H^{p^n}\simeq q^{\Z}/q^{p^n\Z} \times U_{{\cal M},1}/U_{{\cal M},1}^{p^n}
$$
for any parity of $\mbox{ord}_p(q)$. 

We see that 
\begin{eqnarray*}
U_{{\cal M}}&\simeq& \mu_{p^2-1} \times U_{{\cal M}}^{(1)} \simeq \Z/(p^2-1)\Z\times 
\Z_p^{\oplus 2},\\
U_{\Q_p}&\simeq& \mu_{p-1} \times U_{\Q_p}^{(1)} \simeq \Z/(p-1)\Z\times 
\Z_p,
\end{eqnarray*}
where $U_{{\cal M}}^{(1)}$ (resp.\ $U_{\Q_p}^{(1)}$) is the principal congruence subgroup modulo $p$. 
It follows from (\ref{120318b}) that 
$$
U_{{\cal M},1}\simeq \Z/(p+1)\Z \times \Z_p
$$
and 
$$
U_{{\cal M},1}/U_{{\cal M},1}^{p^n}\simeq \Z/p^n\Z.
$$
Therefore we have
$$
H/H^{p^n}\simeq \langle q \rangle \times \langle u \rangle
\simeq \Z/p^n\Z \times \Z/p^n\Z
$$
for some $u$ in $U_{{\cal M},1}$. 

It follows from 
$E[p^n]=\langle\zeta_{p^n},q^{\frac{1}{p^n}}\rangle \subset E({\cal K}_n{\cal M})$ that 
$$
{\cal L}_n{\cal M} \subset {\cal K}_n{\cal M}(u^{\frac{1}{p^n}}).
$$
Thus ${\cal L}_n{\cal M}/{\cal K}_n{\cal M}$ is 
a cyclic extension 
and the degree $[{\cal L}_n{\cal M}:{\cal K}_n{\cal M}]$ divides $p^n$. 
Since [${\cal K}_n{\cal M}:{\cal K}_n$] is coprime to $[{\cal L}_n:{\cal K}_n]$, we have the following lemma. 
%
%
%
% Lemma 3.3
%
\begin{Lem}
Assume that $E$ has non-split multiplicative reduction at $p$,  and 
$A \subset E(\Q_p)$. 
%If $p \nmid \mbox{ord}_p(q)$, 
Then ${\cal L}_n/{\cal K}_n$ is a cyclic extension and 
$[{\cal L}_n:{\cal K}_n]$ divides $p^n$. 
\label{120317c}
\end{Lem}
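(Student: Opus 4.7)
The plan is to follow the same reduction as in Lemma \ref{1228c}: replace the extension ${\cal L}_n/{\cal K}_n$ by the larger extension ${\cal L}_n{\cal M}/{\cal K}_n{\cal M}$. Since $[{\cal M}:\Q_p]=2$ and $[{\cal L}_n:{\cal K}_n]$ is a power of the odd prime $p$, the two extensions are linearly disjoint over ${\cal K}_n$, so $[{\cal L}_n:{\cal K}_n]=[{\cal L}_n{\cal M}:{\cal K}_n{\cal M}]$ and cyclicity descends. By Lemma \ref{1228c} we have ${\cal L}_n{\cal M}={\cal K}_n{\cal M}(p_1^{1/p^n},\ldots,p_r^{1/p^n})$, where the $p_j\in{\cal M}^\ast$ satisfy ${\rm N}_{{\cal M}/\Q_p}(p_j)\in q^{\Z}$ (because $P_j\in E(\Q_p)$ and the Tate uniformization is Galois-equivariant). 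Thus each $p_j$ lies in the subgroup $H\subset{\cal M}^\ast$, and the task reduces to understanding $H/H^{p^n}$.

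Next I would determine the structure of $H/H^{p^n}$ using that ${\cal M}/\Q_p$ is unramified. From the norm-exact sequence $1\to U_{{\cal M},1}\to U_{\cal M}\xrightarrow{{\rm N}}U_{\Q_p}\to 1$ and the standard decompositions $U_{\cal M}\simeq\mu_{p^2-1}\times U_{\cal M}^{(1)}$, $U_{\Q_p}\simeq\mu_{p-1}\times U_{\Q_p}^{(1)}$, one extracts $U_{{\cal M},1}\simeq\Z/(p+1)\Z\times\Z_p$ and hence $U_{{\cal M},1}/U_{{\cal M},1}^{p^n}\simeq\Z/p^n\Z$. Then I would split into two cases according to the parity of $\nu:={\rm ord}_p(q)$. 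When $\nu$ is odd, no element of ${\cal M}^\ast$ has norm $q$ (norms have even $p$-adic valuation), so $H=q^{\Z}\times U_{{\cal M},1}$ directly. When $\nu$ is even, pick $u\in U_{\cal M}$ with ${\rm N}(u)=qp^{-\nu}$; then $H_0:=q^{\Z}\times U_{{\cal M},1}$ has index at most $2$ in $H$, and since $2$ is coprime to $p$ the inclusion $H_0\hookrightarrow H$ induces an isomorphism modulo $p^n$-th powers. Either way
$$H/H^{p^n}\;\simeq\;\langle q\rangle/\langle q^{p^n}\rangle\times U_{{\cal M},1}/U_{{\cal M},1}^{p^n}\;\simeq\;\Z/p^n\Z\times\Z/p^n\Z,$$
generated by $q$ and some $u\in U_{{\cal M},1}$.

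Finally I would kill the $q$-factor. Since $E\simeq E_q$ over ${\cal M}$ and $E[p^n]=\langle\zeta_{p^n},q^{1/p^n}\rangle\subset E_q({\cal K}_n{\cal M})$, the element $q^{1/p^n}$ already lies in ${\cal K}_n{\cal M}$. Therefore in the quotient $H/H^{p^n}$ each $p_j$ is, modulo powers of $q$, a power of the single generator $u$, so
$${\cal L}_n{\cal M}\;\subset\;{\cal K}_n{\cal M}\bigl(u^{1/p^n}\bigr),$$
which is a cyclic Kummer-type extension of degree dividing $p^n$. Descending via linear disjointness from ${\cal L}_n{\cal M}$ to ${\cal L}_n$ yields the claim.

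The main obstacle is the careful bookkeeping in the two parity cases for ${\rm ord}_p(q)$: one must verify that the exceptional $[H:H_0]=2$ case in the even parity regime does not obstruct the identification $H/H^{p^n}\simeq H_0/H_0^{p^n}$, and that the whole reduction is robust to the choice of representatives $p_j$ (which are only well-defined modulo $q^{\Z}$ from the Tate uniformization). Once this is handled, the remaining cyclicity and degree bound follow from standard Kummer theory.
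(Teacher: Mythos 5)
Your proposal is correct and follows essentially the same route as the paper: reduce to ${\cal L}_n{\cal M}/{\cal K}_n{\cal M}$ via Lemma \ref{1228c}, place the $p_j$ in the norm subgroup $H$, compute $H/H^{p^n}\simeq\Z/p^n\Z\times\Z/p^n\Z$ through the unramified norm-exact sequence and the two parity cases for ${\rm ord}_p(q)$, and then absorb the $q$-generator into ${\cal K}_n{\cal M}$ since $q^{1/p^n}\in{\cal K}_n{\cal M}$. The descent from ${\cal L}_n{\cal M}$ to ${\cal L}_n$ using coprimality of $[{\cal K}_n{\cal M}:{\cal K}_n]$ and $[{\cal L}_n:{\cal K}_n]$ is also exactly the paper's final step.
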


Since $\mbox{ord}_p(q)=\mbox{ord}_p(\Delta)$, 
the assumption $p \nmid \mbox{ord}_p(q)$ is equivalent to 
$p \nmid \mbox{ord}_p(\Delta)$. 
If the conductor of $E$ is a prime $p$, 
then $p \ge 11$ and $\Delta$ divides $p^5$. 
Thus the assumption $p\nmid \mbox{ord}_p(q)$ is satisfied 
for elliptic curves with prime conductor $p$. 
\section{Proof of Theorem \ref{1002a}}
Let notations and assumptions be the same as in Theorem \ref{1002a}. 
As we see in the end of the previous section, 
we have $p \nmid \mbox{ord}_p(q)$. 
By using Lemmas \ref{0110a} and \ref{120317c}, Corollary \ref{0813a}, 
we estimate the degree $[L_n \cap K^{{\rm ur}}_n:K_n]$.

Let $\p$ be a prime ideal of $K_n$ lying above $p$ 
and let $\P$ be a prime ideal of $L_n$ lying above $\p$. 
Let $I_\P$ be the inertia group of $\P$ in $\mbox{Gal}(L_n/K_n)$. 
Since $L_n/K_n$ is abelian, $I_\P$ is independent of a choice of $\P$ lying above $\p$. 
We put 
$$
I:=\langle \sigma I_\P\sigma^{-1}~|~\sigma \in \mbox{Gal}(L_n/\Q) \rangle.
$$
The group $\sigma I_\P\sigma^{-1}$ is the inertia group of ${}^\sigma \P$ lying above ${}^\sigma\p$. 
Since $L_n/K_n$ is abelian, 
$$
I=\langle \sigma I_\P\sigma^{-1}~|~\sigma \in \mbox{Gal}(K_n/\Q) \rangle.
$$
Since $\mbox{Gal}(L_n/K_n)$ is a normal subgroup of $\mbox{Gal}(L_n/\Q)$, 
$I$ is contained in $\mbox{Gal}(L_n/K_n)$ 
and its fixed field $L_n^I$ is unramified over $K_n$. 

It follows from $A\subset E(\Q)$ 
that $\Phi_n$ is a $\mbox{Gal}(K_n/\Q)$-isomorphism. 
$$
\Phi_n(I)=\langle {}^\sigma \Phi_n(I_\P)~|~\sigma \in \mbox{Gal}(K_n/\Q) \rangle\subset E[p^n]^r.
$$
By using Lemmas \ref{0110a} and \ref{120317c}, 
$I_\P$ is cyclic. 
We denote its generator by $x$. 
Then we have 
$$
\Phi_n(I)=\langle {}^\sigma \Phi_n(x)~|~\sigma \in \mbox{Gal}(K_n/\Q) \rangle
=(\Z/p^n\Z)[\mbox{Gal}(K_n/\Q)]\Phi_n(x).
$$
Since $(\Z/p^n\Z)[\mbox{Gal}(K_n/\Q)]\simeq M_2(\Z/p^n\Z)$, 
we have 
$$
\sharp I =\sharp \Phi_n(I) \leq p^{4n}. 
$$
By using Corollary \ref{0813a}, 
we have 
\begin{equation}
[L \cap K^{{\rm ur}}_n:K_n]=\frac{[L_n:K_n]}{[L_n:L_n^I]}
\geq \frac{p^{2nr}}{p^{4n}}=p^{(2r-4)n}
\label{120307e}
\end{equation}
for each $n\geq 1$. 

This completes Theorem \ref{1002a}.
%
%
%
% Example
%
\begin{Exa}
Let $E$ be the elliptic curve defined by $y^2+y=x^3-7x+6$. 
This elliptic curve has prime conductor $p=5077$ with $j(E)=-\frac{2^{12}\cdot 3^3\cdot 7^3}{p}$ 
and it has non-split multiplicative reduction at $p$. 
By {\rm \cite{bgz}}, 
we see that $E(\Q)$ is a free $\Z$-module of rank $3$ and it is generated by 
$P_1=(0,2),\ P_2=(1,0),\ P_3=(2,0)$. 
Then by Theorem {\rm \ref{1002a}}, we have $\kappa_n\ge n(2\cdot 3-4)=2n$. 
Hence the class number $h_{\Q(E[p^n])}$ satisfies 
$$5077^{2n}~|~h_{\Q(E[p^n])}$$
for each $n\ge 1$. 
\end{Exa}
\section{A note on class number formula obtained from cyclotomic 
$\Z_p$-extension}

Let the notation be the same as in \S 1. 
As we see in the beginning of \S.\ 2, 
we have $\mbox{Gal}(K_1/\Q)\simeq \mbox{GL}_2(\Z/p\Z)$. 
We denote the maximal unramified abelian extension of $\Q(\zeta_{p^n})$ by $\Q(\zeta_{p^n})^{{\rm ur}}$.
%
%
%
% Lemma 5.1
%
\begin{Lem}
$\Q(\zeta_{p^n})^{{\rm ur}} \cap K_n=\Q(\zeta_{p^n})$ 
holds for each positive integer $n$. 
\label{0324a}
\end{Lem}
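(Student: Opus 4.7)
The plan is to identify $F := \Q(\zeta_{p^n})^{{\rm ur}}\cap K_n$ as an \emph{abelian} extension of $\Q(\zeta_{p^n})$ sitting inside $K_n$, and then use that $SL_2(\Z/p^n\Z)$ is perfect for $p\ge 5$ to force $F=\Q(\zeta_{p^n})$. Note that ramification enters only in the definition of $F$; the conclusion will in fact follow purely from the group theory of $\mbox{Gal}(K_n/\Q(\zeta_{p^n}))$, so the unramifiedness is a free extra hypothesis that we never need to exploit.

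First, I would recall from the Weil pairing that $\Q(\zeta_{p^n})\subset K_n$ and that, under the isomorphism ${\rm Gal}(K_n/\Q)\simeq GL_2(\Z/p^n\Z)$ established at the start of \S 2, the subfield $\Q(\zeta_{p^n})$ corresponds to the determinant map. Thus
\[
{\rm Gal}(K_n/\Q(\zeta_{p^n}))\simeq SL_2(\Z/p^n\Z).
\]
Since $F/\Q(\zeta_{p^n})$ is contained in the abelian extension $\Q(\zeta_{p^n})^{{\rm ur}}/\Q(\zeta_{p^n})$, it is itself abelian, so ${\rm Gal}(K_n/F)$ is a normal subgroup of $SL_2(\Z/p^n\Z)$ containing its commutator subgroup $[SL_2(\Z/p^n\Z),SL_2(\Z/p^n\Z)]$.

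Next, I would invoke the standard fact that $SL_2(\Z/p^n\Z)$ is perfect whenever $p\ge 5$; this follows either from the simplicity of $PSL_2(\F_p)$ together with a lift argument through the successive kernels $1+p^iM_2(\Z/p^n\Z)$, or directly from an explicit expression of elementary matrices as commutators. Since the present setting forces $p\ge 11$, we conclude that the commutator subgroup is all of $SL_2(\Z/p^n\Z)$, hence ${\rm Gal}(K_n/F)=SL_2(\Z/p^n\Z)$, i.e.\ $F=\Q(\zeta_{p^n})$, which is exactly the claim.

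The only real point of care is the perfectness of $SL_2(\Z/p^n\Z)$ for $n\ge 2$ (the case $n=1$ is immediate from the non-abelian simplicity of $PSL_2(\F_p)$). I expect this to be the main (though standard) obstacle, and would dispatch it by filtering by the kernels $H_i/H_{i+1}\simeq M_2(\F_p)$ studied in Lemma \ref{0812b} and noting that the conjugation action of $SL_2(\F_p)$ on $M_2(\F_p)$ generates the whole of $M_2(\F_p)$ as a commutator module, so perfectness propagates from $n=1$ to all $n\ge 1$ by induction.
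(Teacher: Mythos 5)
Your proof is correct and follows essentially the same route as the paper: both arguments reduce the claim to the fact that any abelian quotient of ${\rm Gal}(K_n/\Q(\zeta_{p^n}))\simeq SL_2(\Z/p^n\Z)$ is trivial because that group is perfect (the paper simply cites Lemma 19 of Jones for perfectness, whereas you sketch the standard lifting argument). One small correction to your sketch: the graded pieces of the relevant filtration of $SL_2(\Z/p^n\Z)$ are the trace-zero matrices $M_2(\F_p)^{{\rm tr}=0}$, not all of $M_2(\F_p)$ (commutators $gMg^{-1}-M$ only ever span the trace-zero part), and it is the irreducibility of that $3$-dimensional module under conjugation by $SL_2(\F_p)$, valid since $p\ge 5$, that propagates perfectness from $n=1$ to all $n$.
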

\begin{proof}
The image of the surjective homomorphism 
\begin{equation}
G:={\rm Gal}(K_n/\Q(\zeta_{p^n}))\lra {\rm Gal}(\Q(\zeta_{p^n})^{{\rm ur}}\cap K_n/\Q(\zeta_{p^n}))
\label{120327a}
\end{equation}
is commutative. 
Thus the homomorphism (\ref{120327a}) factors the abelian quotient $G^{{\rm ab}}=G/[G,G]$. 
However, 
we have $[G,G]=G$ by $G\simeq {\rm SL}_2(\Z/p^n\Z)$ (cf.\ p.\ 1559, Lemma 19 of \cite{J}). 
Thus the image of the homomorphism (\ref{120327a}) is trivial. 

This completes the proof. 
\end{proof}
%
%
%
% Lemma 5.2
%
\begin{Lem}
$L_n \cap \Q(\zeta_{p^n})^{{\rm ur}}K_n=K_n$ 
holds for each positive integer $n$. 
\label{0324b}
\end{Lem}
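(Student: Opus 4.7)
The plan is to let $N := L_n \cap \Q(\zeta_{p^n})^{\rm ur} K_n$ and argue that $\mbox{Gal}(N/K_n)$ is simultaneously a quotient of two essentially incompatible $\mbox{Gal}(K_n/\Q)$-modules, forcing it to be trivial. First I note that $\Q(\zeta_{p^n})^{\rm ur}/\Q$ is Galois (being the maximal unramified abelian extension of the Galois extension $\Q(\zeta_{p^n})/\Q$), so $\Q(\zeta_{p^n})^{\rm ur} K_n/\Q$ is Galois, and hence $N/\Q$ is Galois as an intersection of two Galois extensions of $\Q$. In particular $\mbox{Gal}(N/K_n)$ inherits a $\mbox{Gal}(K_n/\Q)$-module structure, and by Corollary \ref{0813a} it is a $\mbox{Gal}(K_n/\Q)$-module quotient of $\mbox{Gal}(L_n/K_n) \cong E[p^n]^r$.

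Next I use Lemma \ref{0324a}, which gives $\Q(\zeta_{p^n})^{\rm ur} \cap K_n = \Q(\zeta_{p^n})$. Combined with the fact that $\Q(\zeta_{p^n})^{\rm ur} K_n / \Q(\zeta_{p^n})$ is Galois, this yields the direct product decomposition
$$
\mbox{Gal}(\Q(\zeta_{p^n})^{\rm ur} K_n/\Q(\zeta_{p^n})) \cong \mbox{Gal}(\Q(\zeta_{p^n})^{\rm ur}/\Q(\zeta_{p^n})) \times \mbox{Gal}(K_n/\Q(\zeta_{p^n})).
$$
Identifying $\mbox{Gal}(K_n/\Q(\zeta_{p^n})) \cong \mbox{SL}_2(\Z/p^n\Z)$ (since the determinant on $\mbox{GL}_2(\Z/p^n\Z)$ is the cyclotomic character on $E[p^n]$), the direct product structure shows that the conjugation action of $\mbox{SL}_2(\Z/p^n\Z)$ on $\mbox{Gal}(\Q(\zeta_{p^n})^{\rm ur} K_n/K_n)$ is trivial. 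Since $\mbox{Gal}(N/K_n)$ is a further quotient of this module, $\mbox{SL}_2(\Z/p^n\Z)$ acts trivially on $\mbox{Gal}(N/K_n)$ as well.

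The concluding step is a direct computation of coinvariants. It suffices to show $(E[p^n])_{\mbox{SL}_2(\Z/p^n\Z)} = 0$, for then $(E[p^n]^r)_{\mbox{SL}_2(\Z/p^n\Z)} = 0$, and any quotient of $E[p^n]^r$ on which $\mbox{SL}_2(\Z/p^n\Z)$ acts trivially must vanish. Fixing the standard basis $e_1,e_2$ of $E[p^n] = (\Z/p^n\Z)^2$, the matrices
$$
\begin{pmatrix} 1 & 0 \\ 1 & 1 \end{pmatrix},\ \begin{pmatrix} 1 & 1 \\ 0 & 1 \end{pmatrix} \in \mbox{SL}_2(\Z/p^n\Z)
$$
act on $e_1, e_2$ so that $(g-1)e_1 = e_2$ and $(g'-1)e_2 = e_1$; hence both $e_1$ and $e_2$ lie in the augmentation ideal times $E[p^n]$, killing the coinvariants. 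This forces $\mbox{Gal}(N/K_n) = 1$, i.e.\ $N = K_n$.

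The main obstacle is bookkeeping rather than depth: one must keep track of which group acts on which Galois module through which inflation/restriction, and in particular verify that the $\mbox{SL}_2(\Z/p^n\Z)$-triviality propagates cleanly along the chain of quotients from $\mbox{Gal}(\Q(\zeta_{p^n})^{\rm ur} K_n/K_n)$ down to $\mbox{Gal}(N/K_n)$ using the Galoisness of $N/\Q$. Once those structures are aligned, the coinvariant calculation is immediate.
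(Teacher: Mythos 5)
Your proof is correct and follows essentially the same route as the paper: Lemma \ref{0324a} yields the direct product decomposition, which forces the conjugation action of $\mathrm{Gal}(K_n/\Q(\zeta_{p^n}))\simeq\mathrm{SL}_2(\Z/p^n\Z)$ on $\mathrm{Gal}(N/K_n)$ to be trivial, and this is incompatible with $\mathrm{Gal}(N/K_n)$ being a nonzero quotient of the module $E[p^n]^r$. The only (cosmetic) difference is the final step: the paper uses the central element $-1\in\mathrm{SL}_2(\Z/p^n\Z)$ together with the oddness of $p$, whereas you kill the coinvariants with unipotent matrices; both work, and your coinvariants formulation handles the passage to the quotient $\mathrm{Gal}(N/K_n)$ a bit more cleanly than the paper's fixed-point phrasing.
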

\begin{proof}
By Lemma \ref{0324a}, 
we have
$$\mbox{Gal}(\Q(\zeta_{p^n})^{{\rm ur}}K_n/\Q(\zeta_{p^n}))
\simeq \mbox{Gal}(K_n/\Q(\zeta_{p^n}))\times \mbox{Gal}(\Q(\zeta_{p^n})^{{\rm ur}}/\Q(\zeta_{p^n})).$$ 

Recall that $\mbox{Gal}(K_n/\Q)$ acts on $\mbox{Gal}(L_n/K_n)$ by the usual manner, since $\mbox{Gal}(L_n/K_n)$ is abelian. 
Thus the action of $\mbox{Gal}(K_n/\Q(\zeta_{p^n}))$ 
on $\mbox{Gal}(L_n \cap \Q(\zeta_{p^n})^{{\rm ur}}/K_n)$ 
induced by (\ref{120327a}) is trivial. 
Then for each $\sigma$ in $\mbox{Gal}(L_n \cap \Q(\zeta_{p^n})^{{\rm ur}}K_n/K_n)$, 
the point ${}^\sigma T_j-T_j\in E[p^n]$ is a $\mbox{Gal}(K_n/\Q(\zeta_{p^n}))$-fixed point. 
Since $\mbox{Gal}(K_n/\Q(\zeta_{p^n}))\simeq \mbox{SL}_2(\Z/p^n\Z)$ contains $-1$ 
and $p$ is an odd prime number, 
the point ${}^\sigma T_j-T_j$ is never fixed by  $\mbox{Gal}(K_n/\Q(\zeta_{p^n}))$ if $\sigma\neq 1$, 
hence we have $\mbox{Gal}(L_n \cap \Q(\zeta_{p^n})^{{\rm ur}}K_n/K_n)=1$. 
\end{proof}

By Lemma \ref{0324b}, the contribution of the class number that is shown in Theorem \ref{1002a} does not come from the class number of $\Q(\zeta_{p^n})$.

\end{document}